\date{}
\documentclass[a4paper]{article}

\usepackage{setspace}
\usepackage{amsmath}
\usepackage{amsfonts}
\usepackage{cite}
\usepackage{amssymb}
\usepackage{amsthm}

\newtheorem{theorem}{Theorem}

\newtheorem{corollary}[theorem]{Corollary}

\newtheorem{lemma}[theorem]{Lemma}
\newtheorem{notation}[theorem]{Notation}

\begin{document}

\title{Fractional Multidimensional System}
\author{Xiaogang Zhu and Junguo Lu
\thanks{Junguo Lu is with the School of Electronic Information
and Electrical Engineering, Shanghai Jiao Tong University, Shanghai,
200240 China}
}

\maketitle

\section{Abstract}
The multidimensional ($n$-D) systems described by Roesser model are presented in this paper. These $n$-D systems
consist of discrete systems and continuous fractional order systems with fractional order $\nu$, $0<\nu<1$.
The stability and Robust stability of such $n$-D systems are investigated.

$\mathbf{Keywords}$: $n$-D; fractional; stability; Robust
\section{Introduction}

The multidimensional ($n$-D) systems have been studied for almost four decades \cite{Roesser1975discrete,Agathoklis1988Lyapunov,Galkowski2002LMI,Bachelier2008Kalman,Bachelier2012Fractional}. It
has been applied in fields such as image process \cite{Roesser1975discrete}, $n$-D coding and decoding \cite{Shi2002new} and $n$-D filtering \cite{Basu2002Multidimensional}.
The $n$-D systems can represent dynamic
processes that information propagates in many independent directions. However, the information of one dimensional systems only propagates in one direction.

As for a multidimensional system which consists of fractional order differential equations, Galkowski et al. first presented such a system in 2005 \cite{Galkowski2005Fractional}. But until now, researches on fractional $n$-D systems are either discrete system \cite{Galkowski2005Fractional} or continuous system with different fractional order \cite{Galkowski2005Fractional,Galkowski2006Fractional}. To the best of our knowledge, fractional $n$-D systems which consist of discrete system and fractional order system are not studied.

This paper focus on a hybrid $n$-D system which consists of discrete system and continuous fractional order system.

\begin{notation}
For a matrix $X$, $X^*,X^T$ denote the transpose conjugate and transpose of matrix $X$, respectively. $Sym(X)$ denotes
$X+X^*$.
$I$ is the identity matrix with appropriate
dimensions. For a matrix $X$, $X>0\ (X\geq0)$ means positive definite (semi-definite) and $X<0\ (X\leq0)$ means
negative definite (semi-definite). The notation $\mathcal{H}_n$ stands for the set of Hermitian matrices of dimension $n$.
And $\mathcal{H}_n^+\subset\mathcal{H}_n$ stands for the subset of positive definite matrices while $\mathcal{H}_n^-\subset\mathcal{H}_n$
is the subset of negative definite matrices.
Let the following notations be defined
\[
\underset{i=1}{\overset{k}{\oplus}}M_i=\underset{i=1,...,k}{\mathrm{diag}}M_i
\]
Let $\mathbb{I}(k)$ be
\[
\mathbb{I}(k)\triangleq{1,...,k},\ k\in\mathbb{N}
\]
\end{notation}

\section{Preliminaries}
Based on Bochniak's model\cite{Bochniak2005LMI}, Bachelier\cite{Bachelier2008Kalman} presented a hybrid version of Roesser model\cite{Roesser1975discrete}, which combined
integer order continuous system and discrete system. Here, we apply the Roesser model to a continuous-discrete fractional order system.
\begin{align}
\begin{bmatrix}
D^{\nu}x^1(t_1,...,t_r,j_{r+1},...,j_k)\\
\vdots\\
D^{\nu}x^r(t_1,...,t_r,j_{r+1},...,j_k)\\
x^{r+1}(t_1,...,t_r,j_{r+1}+1,...,j_k)\\
\vdots\\
x^{k}(t_1,...,t_r,j_{r+1},...,j_k+1)
\end{bmatrix}&=\begin{bmatrix}
A_{c} & A_{cd}\\
A_{dc} & A_{d}
\end{bmatrix}\begin{bmatrix}
x^1(t_1,...,t_r,j_{r+1},...,j_k)\\
\vdots\\
x^r(t_1,...,t_r,j_{r+1},...,j_k)\\
x^{r+1}(t_1,...,t_r,j_{r+1},...,j_k)\\
\vdots\\
x^{k}(t_1,...,t_r,j_{r+1},...,j_k)
\end{bmatrix}\notag\\
&+\begin{bmatrix}
B_1\\
B_2
\end{bmatrix}u(t_1,...,t_r,j_{r+1},...,j_k)\notag\\
y(t_1,...,t_r,j_{r+1},...,j_k)&=\begin{bmatrix}
C_1 & C_2
\end{bmatrix}\begin{bmatrix}
x^1(t_1,...,t_r,j_{r+1},...,j_k)\\
\vdots\\
x^r(t_1,...,t_r,j_{r+1},...,j_k)\\
x^{r+1}(t_1,...,t_r,j_{r+1},...,j_k)\\
\vdots\\
x^{k}(t_1,...,t_r,j_{r+1},...,j_k)
\end{bmatrix}\notag\\
&+Du(t_1,...,t_r,j_{r+1},...,j_k)\label{sysFraConDis}
\end{align}
where $\sum\limits_{i=1}^{k}n_i=n$ and $0<\nu\leq1$.

The vectors $x^i(t_1,...,t_r,j_{r+1},...,j_k)$, $u(t_1,...,t_r,j_{r+1},...,j_k)$ and $y(t_1,...,t_r,j_{r+1},...,j_k)$
are the local state subvectors, the input vector and the output vector, respectively. The matrices
\begin{align*}
&A=\begin{bmatrix}
A_{c} & A_{cd}\\
A_{dc} & A_{d}
\end{bmatrix}\in\mathbb{R}^{n\times n},\ B=\begin{bmatrix}
B_1\\
B_2
\end{bmatrix}\in\mathbb{R}^{n\times p},\\
&C=\begin{bmatrix}
C_1 & C_2
\end{bmatrix}\in\mathbb{R}^{l\times n},\ D\in\mathbb{R}^{l\times p}
\end{align*}
are the state, control, observation and transfer matrices respectively.

By applying the Laplace transform and the $Z$-transform of system (\ref{sysFraConDis}), the following
can be obtained
\[
Y(\rho)=T(\rho)U(\rho)
\]
where
\begin{equation}
T(\rho)=C(H(\rho)-A)^{-1}B+D,\ H(\rho)\triangleq\underset{i=1}{\overset{k}{\oplus}}\rho_{i}I_{n_i}
\label{defTHrho}
\end{equation}

Then, $\Delta(\rho,A)$ is defined by
\begin{equation}
\Delta(\rho,A)\triangleq\det(H(\rho)-A)\label{defDelta}
\end{equation}

In this paper, we use the Caputo's fractional derivative, of which the Laplace
transform allows utilization of initial values. The Caputo's fractional
derivative is defined as \cite{Podlubny1999Fractional}

\[
_{a}D_{t}^{\alpha}f(t)=\frac{1}{\Gamma(\alpha-n)}\int_{a}^{t}\frac
{f^{(n)}(\tau)d\tau}{(t-\tau)^{\alpha+1-n}}%
\]

where $n$ is an integer satisfying $0\leq n-1<\alpha<n$; $\Gamma(\cdot)$ is
the Gamma function which is defined as

\[
\Gamma(z)=\int_{0}^{\infty}e^{-t}t^{z-1}dt
\]

The following lemmas are useful for presenting our results.

\begin{lemma}\cite{Skeltonunified}
\label{lemNuXNuT}
For given matrices $\mathcal{U\in}\mathbb{C}^{n\times m},\Phi=\Phi^{\ast}\in\mathbb{C}^{n\times n}$, the following two statements are equivalent
\begin{enumerate}
\item $\mathcal{U}\Phi\mathcal{U}^*<0$
\item There exists a matrix $X$, such that $\Phi+\mathcal{N}_{u}X\mathcal{N}_{u}^{*}<0$
\end{enumerate}
where $\mathcal{N}_{u}$ is the orthogonal complement of $\mathcal{U}$.
\end{lemma}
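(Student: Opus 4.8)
The plan is to recognize this as the classical projection (elimination) lemma and to prove the two implications by passing to the basis adapted to $\mathcal{U}$ and its orthogonal complement $\mathcal{N}_u$. I read ``$\mathcal{N}_u$ is the orthogonal complement of $\mathcal{U}$'' in the usual LMI sense: $\mathcal{N}_u$ has full column rank, $\mathcal{U}\mathcal{N}_u=0$, and the ranges of $\mathcal{U}^*$ and $\mathcal{N}_u$ are complementary, so that $T\triangleq[\mathcal{U}^*\ \mathcal{N}_u]$ is square and nonsingular. This construction presupposes---and statement~1 in any case forces---that $\mathcal{U}$ has full row rank: if $w^*\mathcal{U}=0$ with $w\neq0$, then $w^*(\mathcal{U}\Phi\mathcal{U}^*)w=0$, contradicting negative definiteness. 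With $\mathcal{U}$ of full row rank, $\mathcal{U}^*$ has full column rank and congruence by $\mathcal{U}^*$ preserves strict definiteness, which I use below.

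For $2\Rightarrow1$ I would compress the inequality onto the range of $\mathcal{U}^*$. Multiplying $\Phi+\mathcal{N}_uX\mathcal{N}_u^*<0$ on the left by $\mathcal{U}$ and on the right by $\mathcal{U}^*$, and invoking $\mathcal{U}\mathcal{N}_u=0$ to kill the $X$ term, leaves $\mathcal{U}\Phi\mathcal{U}^*<0$, which is statement~1.

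The content is in $1\Rightarrow2$, handled by the congruence $T$. A direct block computation, again using $\mathcal{U}\mathcal{N}_u=0$ to clear the off-diagonal contributions of the free term, gives
\[
T^*\!\left(\Phi+\mathcal{N}_uX\mathcal{N}_u^*\right)T=\begin{bmatrix}\mathcal{U}\Phi\mathcal{U}^* & \mathcal{U}\Phi\mathcal{N}_u\\ \mathcal{N}_u^*\Phi\mathcal{U}^* & \mathcal{N}_u^*\Phi\mathcal{N}_u+(\mathcal{N}_u^*\mathcal{N}_u)X(\mathcal{N}_u^*\mathcal{N}_u)\end{bmatrix}.
\]
Because $T$ is nonsingular, statement~2 is equivalent to the negativity of this block matrix for some $X$. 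Its $(1,1)$ block is exactly $\mathcal{U}\Phi\mathcal{U}^*$, negative definite by hypothesis, so the Schur complement reduces the problem to choosing $X$ with
\[
\mathcal{N}_u^*\Phi\mathcal{N}_u+(\mathcal{N}_u^*\mathcal{N}_u)X(\mathcal{N}_u^*\mathcal{N}_u)-\mathcal{N}_u^*\Phi\mathcal{U}^*(\mathcal{U}\Phi\mathcal{U}^*)^{-1}\mathcal{U}\Phi\mathcal{N}_u<0.
\]
Here $\mathcal{N}_u^*\mathcal{N}_u>0$, so the choice $X=-\gamma(\mathcal{N}_u^*\mathcal{N}_u)^{-1}$ turns the variable term into $-\gamma I$, and for $\gamma$ large enough it dominates the fixed Hermitian remainder, giving the inequality. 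This exhibits an admissible $X$ and closes the implication.

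The step I expect to be delicate is precisely the reduction that underwrites both directions: certifying that $T=[\mathcal{U}^*\ \mathcal{N}_u]$ is square and invertible, so that congruence by $T$ is an equivalence and not merely a one-way implication. This is the only place where a rank hypothesis on $\mathcal{U}$ is genuinely needed---once it is in force, everything else is block bookkeeping together with the Schur complement, and the freedom in $X$ supplies the required negativity on the subspace complementary to the range of $\mathcal{U}^*$.
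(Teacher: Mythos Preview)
Your argument is correct and is the standard congruence/Schur-complement proof of the projection lemma. There is nothing to compare against here: the paper does not prove this lemma but merely quotes it from \cite{Skeltonunified}, so your write-up would in fact supply a proof the paper omits. Your implicit reading of the dimensions (so that $\mathcal{U}\Phi\mathcal{U}^*$ is well-defined, i.e.\ $\Phi$ lives in the larger space) and your observation that statement~1 forces $\mathcal{U}$ to have full row rank are both consistent with how the paper actually invokes the lemma in the proof of Corollary~\ref{coroStableSym}, where $\mathcal{U}=[A\ \ I]$ and $\mathcal{N}_u=\begin{bmatrix}I\\-A\end{bmatrix}$.
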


\begin{lemma}\cite{Skeltonunified}
\label{lemOrthogonalComplement}
For given matrices $\mathcal{U}\in\mathbb{C}^{n\times m},\mathcal{V}\in\mathbb{C}^{k\times n},\Phi=\Phi^{\ast}\in\mathbb{C}^{n\times n}$, then the following two statements are equivalent
\begin{enumerate}
\item There exists matrix $\mathcal{X}\in\mathbb{C}^{m\times k}$ such that $Sym\left\{  \mathcal{U}X\mathcal{V}\right\}+\Phi<0$ holds
\item $\mathcal{N}_{u}\Phi\mathcal{N}_{u}^{*}<0$ and $\mathcal{N}_{v}^{*}\Phi\mathcal{N}_{v}<0$ hold
\end{enumerate}
where $\mathcal{N}_{u},\mathcal{N}_{v}^{*}$ are the orthogonal complement of $\mathcal{U},\mathcal{V}^{*}$, respectively.
\end{lemma}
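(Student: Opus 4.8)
The statement is the Projection (or Elimination) Lemma, and the plan is to prove the two implications separately, carrying the hard direction with the help of Lemma \ref{lemNuXNuT}. Throughout I fix the convention that the orthogonal complements are full-rank annihilators, so that $\mathcal{N}_u\mathcal{U}=0$ and $\mathcal{V}\mathcal{N}_v=0$, with $\mathcal{N}_u$ of full row rank and $\mathcal{N}_v$ of full column rank; these are the only properties of the complements I will use.

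For the implication $1\Rightarrow2$, which is the routine direction, I would start from a feasible $X$ and compress the inequality $Sym\{\mathcal{U}X\mathcal{V}\}+\Phi<0$ against the complements. Pre- and post-multiplying by $\mathcal{N}_u$ and $\mathcal{N}_u^*$ annihilates both $\mathcal{U}X\mathcal{V}$ and its conjugate, since $\mathcal{N}_u\mathcal{U}=0$ forces $\mathcal{U}^*\mathcal{N}_u^*=0$; what survives is $\mathcal{N}_u\Phi\mathcal{N}_u^*<0$, the strictness being guaranteed by the full row rank of $\mathcal{N}_u$. Compressing instead by $\mathcal{N}_v^*$ and $\mathcal{N}_v$ kills the variable terms through $\mathcal{V}\mathcal{N}_v=0$ and yields $\mathcal{N}_v^*\Phi\mathcal{N}_v<0$, which establishes statement 2.

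The substantive direction is $2\Rightarrow1$, and here the plan is to eliminate the two sides one at a time. I would first choose a unitary change of basis $T=[\,T_1\ T_2\,]$ whose first block spans the range of $\mathcal{U}$ and whose second block spans its orthogonal complement, so that in the new coordinates $T^*\mathcal{U}$ has a zero lower block and $T^*\Phi T$ acquires a trailing diagonal block equal, up to congruence by a basis of $\mathcal{N}_u$, to $\mathcal{N}_u\Phi\mathcal{N}_u^*$, which is negative definite by hypothesis. Because the variable $X$ cannot reach this trailing block, I would eliminate it by a Schur complement: the full inequality becomes equivalent to a smaller inequality supported on the range of $\mathcal{U}$, in which $\mathcal{U}X$ behaves as an unconstrained multiplier. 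That reduced inequality is one-sided in the remaining data $\mathcal{V}$, so Lemma \ref{lemNuXNuT} applies and converts the existence of $X$ into a projected negativity condition; the final step is to verify that this condition coincides, modulo the already-granted first inequality, with $\mathcal{N}_v^*\Phi\mathcal{N}_v<0$.

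The hard part will be the Schur-complement reduction, because eliminating the $\mathcal{N}_u$-block produces a term that is quadratic in $X$, through $\mathcal{U}X$ acting against the inverse of the eliminated negative-definite block, rather than affine. I expect to control this coupling by completing the square and absorbing the cross terms with a sufficiently large scaling of the component of $X$ that acts in the coupling directions, so that feasibility of the reduced one-sided problem is governed exactly by the second projected inequality and not by the off-diagonal interaction. Checking that the dominated problem is feasible precisely under $\mathcal{N}_v^*\Phi\mathcal{N}_v<0$, and that no hidden obstruction survives the elimination, is where the real care is needed; the two compressions from the easy direction serve as the guide for which blocks must be negative for the construction to close.
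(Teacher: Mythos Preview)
The paper does not prove this lemma at all; it is simply quoted from \cite{Skeltonunified} as a known tool, with no accompanying argument. There is therefore no in-paper proof to compare your proposal against.

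On its own merits: your $1\Rightarrow 2$ is correct and standard. For $2\Rightarrow 1$, the architecture you describe---rotate so that $\mathcal{U}$ becomes block-upper, use $\mathcal{N}_u\Phi\mathcal{N}_u^*<0$ to make the trailing diagonal block negative definite, then Schur-complement it away---is indeed the classical route, and you have correctly located the real difficulty: the Schur complement produces a term quadratic in $X$, so the reduced problem is no longer affine. Where the proposal is loose is the claim that ``Lemma~\ref{lemNuXNuT} applies'' to the reduced one-sided problem. Lemma~\ref{lemNuXNuT} concerns symmetric perturbations of the form $\mathcal{N}_u X\mathcal{N}_u^*$, whereas after your reduction you face an inequality of the shape $\Psi+Sym\{W\widetilde{\mathcal{V}}\}+W\,(\text{positive semidefinite})\,W^*<0$; this is neither the form in Lemma~\ref{lemNuXNuT} nor linear in the unknown, so the invocation does not type-check as written. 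The standard completions of this argument (Skelton--Iwasaki--Grigoriadis, or Gahinet--Apkarian) either construct $X$ explicitly from the two projected inequalities or use a Finsler-type scaling to dominate the quadratic term; your completing-the-square idea is in the right spirit, but you would still need to show that the residual after optimizing over $X$ is congruent to $\mathcal{N}_v^*\Phi\mathcal{N}_v$, and to handle the case where the quadratic coefficient is only semidefinite. As a plan it is reasonable; as a proof it has a genuine gap at exactly the step you flagged as hard.
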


\begin{lemma}\cite{Ortigueira2000Introduction}
\label{lemFinalVT}
Let $F(s)$ be the Laplace transform of the function $f(t)$, then for any $\nu>0$,
\[
\lim\limits_{t\rightarrow\infty}D^{\nu}f(t)=\lim\limits_{t\rightarrow0}s^{\nu+1}F(s),\ Re(s)>0
\]
\end{lemma}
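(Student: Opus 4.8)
The plan is to reduce this fractional final-value theorem to the classical integer-order final-value theorem together with the Laplace transform of the Caputo derivative. First I would apply the classical final-value theorem to the function $g(t)=D^{\nu}f(t)$: writing $G(s)\triangleq\mathcal{L}\{g(t)\}(s)$, provided the limit $\lim_{t\rightarrow\infty}g(t)$ exists, the classical theorem gives
\[
\lim_{t\rightarrow\infty}D^{\nu}f(t)=\lim_{s\rightarrow0}s\,G(s),\quad Re(s)>0.
\]

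Next I would substitute the known Laplace transform of the Caputo derivative. With $n=\lceil\nu\rceil$ so that $n-1<\nu\leq n$, this transform reads
\[
G(s)=\mathcal{L}\{D^{\nu}f(t)\}(s)=s^{\nu}F(s)-\sum_{k=0}^{n-1}s^{\nu-k-1}f^{(k)}(0),
\]
and therefore
\[
s\,G(s)=s^{\nu+1}F(s)-\sum_{k=0}^{n-1}s^{\nu-k}f^{(k)}(0).
\]
Letting $s\rightarrow0$, each exponent $\nu-k$ with $0\leq k\leq n-1$ is strictly positive because $n-1<\nu$, so every initial-value term $s^{\nu-k}f^{(k)}(0)$ tends to $0$; only the term $s^{\nu+1}F(s)$ survives, and combined with the previous display this yields
\[
\lim_{t\rightarrow\infty}D^{\nu}f(t)=\lim_{s\rightarrow0}s^{\nu+1}F(s),
\]
which is the claimed identity. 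In the regime of interest here, $0<\nu<1$, one has $n=1$ and the sum collapses to the single boundary term $s^{\nu}f(0)\rightarrow0$.

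I expect the main obstacle to lie not in the algebra above but in justifying the hypotheses of the classical final-value theorem in the fractional setting: one must guarantee that $\lim_{t\rightarrow\infty}D^{\nu}f(t)$ actually exists and that $s\,G(s)$ is analytic on the imaginary axis and throughout the open right half-plane, i.e. that it has no poles with $Re(s)\geq0$ apart from the admissible behaviour at the origin. These are the standing regularity assumptions tacit in the statement; once they are granted, the vanishing of the initial-value terms and the interchange of the two limits complete the argument.
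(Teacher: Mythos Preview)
The paper does not supply a proof of this lemma at all: it is quoted as a known result from \cite{Ortigueira2000Introduction} and used as a black box in the proof of Lemma~\ref{lemContStable}. So there is no ``paper's own proof'' to compare against.

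Your argument is the standard derivation and is correct: apply the classical final-value theorem to $g(t)=D^{\nu}f(t)$, insert the Caputo Laplace transform $G(s)=s^{\nu}F(s)-\sum_{k=0}^{n-1}s^{\nu-k-1}f^{(k)}(0)$, and observe that each initial-value term $s^{\nu-k}f^{(k)}(0)$ vanishes as $s\to 0$ because $\nu-k>0$ for $0\le k\le n-1<\nu$. Your caveat about the tacit hypotheses (existence of the time-domain limit, absence of closed-right-half-plane poles of $sG(s)$) is exactly the point that needs to be stated for the classical theorem to apply, and the paper simply assumes these hold. Incidentally, the statement as printed has a typographical slip: the right-hand limit should read $\lim_{s\to 0}$, not $\lim_{t\to 0}$; your write-up already uses the correct variable.
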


\begin{lemma}\cite{Galkowski2002LMI}
\label{lemDiscStable}
A multidimensional discrete system
\begin{equation}
\begin{bmatrix}
x^{1}(j_{1}+1,...,j_k)\\
\vdots\\
x^{k}(j_{1},...,j_k+1)
\end{bmatrix}=A\begin{bmatrix}
x^{1}(j_{1},...,j_k)\\
\vdots\\
x^{k}(j_{1},...,j_k)
\end{bmatrix}
\end{equation}
is asymptotically stable if and only if
\begin{equation}
\det(H(z)-A)\neq0,\forall z\in \mathcal{U}_z
\end{equation}
where $H(z)$ is defined as in (\ref{defTHrho}) and $\mathcal{U}_z=\{z=\left[
z_1,...,z_k\right]^T:|z_i|\geq1,i=1,...,k\}$.
\end{lemma}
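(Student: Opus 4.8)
The plan is to work in the frequency domain via the $k$-dimensional $Z$-transform and to characterize asymptotic stability through the region of convergence of the transformed state. Writing $z=[z_1,\dots,z_k]^T$ and applying the $Z$-transform in each index direction to the autonomous recursion, the transform $X(z)$ of the state satisfies a relation in which the matrix $H(z)=\oplus_{i=1}^{k}z_iI_{n_i}$ plays the role of the shift operator, so that the free response is governed by the resolvent $(H(z)-A)^{-1}$. Every entry of this resolvent is a rational function whose common denominator is $\det(H(z)-A)$, and the inverse $Z$-transform reproduces precisely the state sequence generated by iterating the Roesser recursion. Asymptotic stability is therefore equivalent to the statement that this inverse transform tends to zero for every initial condition, which I would translate into an analyticity and convergence condition on $(H(z)-A)^{-1}$.

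For necessity, I would suppose the system is asymptotically stable. Then the matrix sequence obtained by driving the recursion from elementary impulse-type initial conditions decays to zero, so its multidimensional $Z$-transform converges on the closed region $\mathcal{U}_z=\{|z_i|\geq1\}$. A convergent transform cannot have poles inside its region of convergence, and the poles of $(H(z)-A)^{-1}$ are exactly the zeros of $\det(H(z)-A)$; hence $\det(H(z)-A)\neq0$ throughout $\mathcal{U}_z$.

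For sufficiency, I would assume $\det(H(z)-A)\neq0$ for all $z\in\mathcal{U}_z$. The key point is that $\det(H(z)-A)$ is a polynomial whose leading behaviour is $\prod_{i=1}^{k}z_i^{n_i}$ as the $|z_i|$ grow, so its nonvanishing persists up to and including the behaviour at infinity; combining this with a compactness argument shows that $\det(H(z)-A)$ is bounded away from zero on a slightly enlarged region $\{|z_i|\geq1-\varepsilon\}$ for some $\varepsilon>0$. On that enlarged region the resolvent $(H(z)-A)^{-1}$ is analytic, so its inverse $Z$-transform admits a multidimensional power-series expansion whose coefficients decay geometrically, and this geometric decay of the free response is exactly asymptotic stability.

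The main obstacle I expect is the passage, in the sufficiency direction, from pointwise nonvanishing of $\det(H(z)-A)$ on the unbounded closed set $\mathcal{U}_z$ to a uniform lower bound on an enlarged region. In the multidimensional setting the region of convergence of a $Z$-transform is a Reinhardt-type domain rather than a simple annulus, so one cannot appeal naively to one-variable theory; instead one must exploit the polynomial structure of $H(z)$ together with the controlled behaviour as $|z_i|\to\infty$ to justify both the required uniformity and the resulting decay estimate for the expansion coefficients.
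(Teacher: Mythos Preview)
The paper does not prove this lemma at all: it is stated in the Preliminaries section as a result quoted from \cite{Galkowski2002LMI}, with no accompanying argument. There is therefore no ``paper's own proof'' against which to compare your proposal.

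As a standalone sketch your outline follows the standard frequency-domain route for the Roesser model, and the obstacle you flag in the sufficiency direction is exactly the genuine one. Two remarks may sharpen it. First, the compactness step is usually carried out after the substitution $w_i=1/z_i$, which maps $\mathcal{U}_z$ to the closed unit polydisc $\{|w_i|\le 1\}$; the leading behaviour $\prod_i z_i^{n_i}$ of $\det(H(z)-A)$ becomes a nonvanishing factor $\prod_i w_i^{-n_i}$ times a polynomial in $w$ that equals $1$ at $w=0$, so nonvanishing on the compact polydisc gives, by continuity, nonvanishing on a slightly larger polydisc and hence the geometric decay you want via multivariable Cauchy estimates. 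Second, in dimensions $k\ge 2$ one must be mindful of nonessential singularities of the second kind on the distinguished boundary; for the \emph{state-space} Roesser model these do not arise because $\det(H(z)-A)$ is a genuine polynomial rather than a ratio, which is implicitly what makes your argument go through. If you want to make the necessity direction fully rigorous, it is cleanest to argue via the explicit transition-matrix (fundamental-matrix) representation of the free response rather than through convergence of a transform of a generic decaying sequence.
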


\section{Main results}
\subsection{Stability}


\begin{lemma}\label{lemContStable}
A multidimensional continuous fractional order system with order $0<\nu\leq1$ and $\det(A)\neq0$
\begin{align}
\begin{bmatrix}
D^{\nu}x^1(t_1,...,t_r)\\
\vdots\\
D^{\nu}x^r(t_1,...,t_r)\\
\end{bmatrix}=A\begin{bmatrix}
x^1(t_1,...,t_r)\\
\vdots\\
x^r(t_1,...,t_r)
\end{bmatrix}\label{sysFracCont}
\end{align}
 is asymptotically stable if
\begin{equation}
\det(H(\lambda)-A)\neq0,\forall \lambda \in \mathcal{U}_\lambda
\end{equation}
where $H(\lambda)$ is defined as in (\ref{defTHrho}) and
$\mathcal{U}_\lambda=\{\lambda:\lambda=\left[\lambda_1,...,\lambda_r\right]^T,\ |\arg(\lambda_i)| \leq \frac{\pi}{2}\nu,0<\nu\leq1,i=1,...,r\}$.
\end{lemma}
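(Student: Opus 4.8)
The plan is to pass to the Laplace domain in each continuous coordinate and exploit the substitution $\lambda_i=s_i^\nu$, which turns the sector condition defining $\mathcal{U}_\lambda$ into a closed-right-half-plane condition on the transform variables. First I would Laplace-transform (\ref{sysFracCont}) in the variables $t_1,\dots,t_r$. Using the transform of the Caputo derivative, $\mathcal{L}\{D^\nu x^i\}=s_i^\nu X^i(s)-s_i^{\nu-1}x^i(0)$, the $i$-th block reads $s_i^\nu X^i(s)-s_i^{\nu-1}x^i(0)=[AX(s)]^i$, so that collecting all blocks gives $(H(\lambda)-A)X(s)=g(s)$, where $\lambda_i=s_i^\nu$, the matrix $H(\lambda)$ is exactly as in (\ref{defTHrho}), and the $i$-th block of $g(s)$ is $s_i^{\nu-1}x^i(0)$. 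Hence $X(s)=(H(\lambda)-A)^{-1}g(s)$, and the only possible singularities of $X$ are the zeros of $\Delta(\lambda,A)=\det(H(\lambda)-A)$.

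Second, I would record the elementary geometric fact that $s_i\mapsto\lambda_i=s_i^\nu$ carries the closed right half-plane $\{\operatorname{Re}(s_i)\ge0\}$ onto the sector $\{|\arg\lambda_i|\le\frac{\pi}{2}\nu\}$, since $\arg(s_i^\nu)=\nu\arg(s_i)$. Therefore the hypothesis $\Delta(\lambda,A)\neq0$ for all $\lambda\in\mathcal{U}_\lambda$ says precisely that $X(s)$ has no singularity for which every $\operatorname{Re}(s_i)\ge0$; that is, the resolvent $(H(\lambda)-A)^{-1}$ is analytic on the closed right-half polydomain. This is the continuous-fractional counterpart of the polydisc condition in Lemma \ref{lemDiscStable}.

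Third, to turn this pole-free condition into genuine decay I would combine Lemma \ref{lemFinalVT} with the system equation itself. Since the right-hand side of (\ref{sysFracCont}) is the vector $D^\nu x$, it is enough to show each $D^\nu x^i\to0$ as the time indices grow; then $D^\nu x=Ax$ together with $\det(A)\neq0$ forces $x\to0$. Applying Lemma \ref{lemFinalVT} in the $i$-th direction evaluates $\lim_{t_i\to\infty}D^\nu x$ as $\lim_{s_i\to0}s_i^{\nu+1}X(s)$. In this product the prefactor $s_i^{\nu+1}$ meets the factor $s_i^{\nu-1}$ sitting in the $i$-th block of $g(s)$ to produce $s_i^{2\nu}\to0$, and it multiplies every remaining term by $s_i^{\nu+1}\to0$; because $\det(A)\neq0$ (so that $H(\lambda)\to0$ gives $(H(\lambda)-A)^{-1}\to-A^{-1}$, kept bounded by the closed-region non-singularity of the second step) the whole limit is $0$. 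This is exactly where both hypotheses are consumed.

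The step I expect to be the main obstacle is the genuinely multidimensional passage to the limit. Unlike the one-dimensional case, non-vanishing of $\Delta$ on the closed region does not by itself guarantee that $x(t_1,\dots,t_r)$ decays as the indices tend to infinity along arbitrary joint directions, and Lemma \ref{lemFinalVT} presupposes that the limit it evaluates exists. I would therefore have to justify the iterated, coordinate-by-coordinate use of the final value theorem, freezing all but one variable and treating the result as a one-parameter fractional system, and to argue that uniform non-singularity of $(H(\lambda)-A)^{-1}$ on the compactified closed polydomain excludes the boundary and asymptotic pathologies typical of $n$-D stability. Managing this uniformity, together with the branch-cut behaviour of $s_i^\nu$ near $s_i=0$ and along the imaginary axis, is where the real work will lie.
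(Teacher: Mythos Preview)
Your proposal is correct and follows essentially the same route as the paper: Laplace-transform using the Caputo formula, use the map $s_i\mapsto s_i^\nu$ to convert the sector hypothesis into nonsingularity of $H(s^\nu)-A$ on the right half-plane, multiply by $s^{\nu+1}$ so that the forcing term becomes $s^{2\nu}x(0)\to 0$, invoke Lemma~\ref{lemFinalVT} to conclude $D^\nu x\to 0$, and finally use $\det(A)\neq0$ to deduce $x\to 0$. The paper carries out precisely these steps, with the same $s^{2\nu}$ bookkeeping; the multidimensional subtleties you flag in your last paragraph (existence of the joint limit, iterated application of the final-value theorem, branch behaviour) are not addressed in the paper's argument either.
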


\begin{proof}
%

Applying Laplace transform to the multidimensional fractional system
(\ref{sysFracCont}), the following holds
\begin{equation}
(H(s^\nu)-A)\begin{bmatrix}
X_1(s)\\
X_2(s)\\
\vdots\\
X_r(s)
\end{bmatrix}=\begin{bmatrix}
s^{\nu-1}x_1(0)\\
s^{\nu-1}x_2(0)\\
\vdots\\
s^{\nu-1}x_r(0)
\end{bmatrix}
\label{prfContSt1}
\end{equation}

It leads to
\begin{equation}
(H(s^\nu)-A)\begin{bmatrix}
s^{\nu+1}X_1(s)\\
s^{\nu+1}X_2(s)\\
\vdots\\
s^{\nu+1}X_r(s)
\end{bmatrix}=\begin{bmatrix}
s^{2\nu}x_1(0)\\
s^{2\nu}x_2(0)\\
\vdots\\
s^{2\nu}x_r(0)
\end{bmatrix}
\label{prfContSt2}
\end{equation}

$\forall\lambda\in \mathcal{U}_\lambda$, $\det(H(\lambda)-A)\neq0$, thus $\det(H(s^\nu)-A)\neq0$ when $Re(s)>0$. It means that (\ref{prfContSt2}) has the only solution
\begin{equation}
\begin{bmatrix}
s^{\nu+1}X_1(s)\\
s^{\nu+1}X_2(s)\\
\vdots\\
s^{\nu+1}X_r(s)
\end{bmatrix}=(H(s^\nu)-A)^{-1}\begin{bmatrix}
s^{2\nu}x_1(0)\\
s^{2\nu}x_2(0)\\
\vdots\\
s^{2\nu}x_r(0)
\end{bmatrix}
\end{equation}

Therefore, the following holds
\[
\lim\limits_{s\rightarrow0,Re(s)>0}s^{\nu+1}X_i(s)=0,\ i=1,2,...,r
\]

According to Lemma \ref{lemFinalVT},
\[
\lim\limits_{t\rightarrow+\infty}D^{\nu}x^{i}(t)=\lim\limits_{s\rightarrow0,Re(s)>0} s^{\nu+1}X_i(s)=0,\ i=1,...,r
\]

Therefore,
\[
\lim\limits_{t\rightarrow+\infty}x(t)=A^{-1}\lim\limits_{t\rightarrow+\infty}D^{\nu}x(t)=0
\]

It implies that the system is asymptotically stable.

%
%

This completes the proof.
\end{proof}

\begin{theorem}\label{thmStableRegion}
Consider a multidimensional system represented by (\ref{sysFraConDis}). Then, it's asymptotically stable if
\begin{equation}
\Delta(\rho,A)\neq0,\ \forall\rho\in \mathcal{U}_{z\lambda}
\end{equation}
where $\Delta(\cdot)$ is defined by (\ref{defDelta}) and $\mathcal{U}_{z\lambda}$ is defined by
\begin{align}
\mathcal{U}_{z\lambda}=\Bigg\{
\rho=\begin{bmatrix}
\rho_1\\
\vdots\\
\rho_k
\end{bmatrix}
\in\mathbb{C}^k : |\arg(\rho_i)|\leq\frac{\pi}{2}\nu,0<\nu\leq1,i=1,...,r; \notag\\
  |\rho_i|\geq1,i=r+1,...,k\Bigg\}
\end{align}
\end{theorem}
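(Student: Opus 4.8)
The plan is to combine the two stability characterizations already established for the purely continuous fractional case (Lemma~\ref{lemContStable}) and the purely discrete case (Lemma~\ref{lemDiscStable}) by splitting the hybrid system~(\ref{sysFraConDis}) along its continuous and discrete coordinates. First I would apply the Laplace transform to the $r$ fractional-order (continuous) components and the $Z$-transform to the remaining $k-r$ discrete components simultaneously, with zero input $u\equiv0$. Writing $\rho_i=s_i^{\nu}$ for $i=1,\dots,r$ and $\rho_i=z_i$ for $i=r+1,\dots,k$, this transforms the recursion into a single algebraic system of the form $(H(\rho)-A)\,\widetilde X=R$, where $H(\rho)=\oplus_{i=1}^{k}\rho_i I_{n_i}$ is exactly the matrix in~(\ref{defTHrho}) and $R$ collects the contributions of the initial conditions. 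The hypothesis $\Delta(\rho,A)=\det(H(\rho)-A)\neq0$ for all $\rho\in\mathcal{U}_{z\lambda}$ guarantees invertibility of $H(\rho)-A$ on the relevant region, so the transformed state is uniquely determined.

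Next I would verify that the region $\mathcal{U}_{z\lambda}$ is precisely the product of the two individual stability regions: its first $r$ coordinates trace out the continuous region $\mathcal{U}_\lambda$ (the sectors $|\arg(\rho_i)|\le\frac{\pi}{2}\nu$), while its last $k-r$ coordinates trace out the discrete region $\mathcal{U}_z$ (the exterior discs $|\rho_i|\ge1$). The key observation is that the non-vanishing condition on $\mathcal{U}_{z\lambda}$ restricts, for each group of coordinates, to exactly the determinant conditions required by Lemmas~\ref{lemContStable} and~\ref{lemDiscStable}. I would then argue that asymptotic stability of the hybrid system is equivalent to the joint decay of all local state subvectors, which can be certified coordinate-group by coordinate-group: the continuous coordinates decay via the final-value argument of Lemma~\ref{lemContStable} (using Lemma~\ref{lemFinalVT}), and the discrete coordinates decay via the $z$-domain root condition of Lemma~\ref{lemDiscStable}.

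The main obstacle will be handling the \emph{coupling} between the continuous and discrete blocks, encoded by the off-diagonal matrices $A_{cd}$ and $A_{dc}$. Unlike the decoupled case, one cannot simply invoke the two lemmas independently, because the evolution in the continuous directions feeds into the discrete recursion and vice versa. To address this I would work with the joint transform $\widetilde X(s_1,\dots,s_r,z_{r+1},\dots,z_k)$ and exploit the fact that the single determinant $\Delta(\rho,A)$ simultaneously controls both types of modes: non-vanishing of $\det(H(\rho)-A)$ on the full product region $\mathcal{U}_{z\lambda}$ means there are no ``unstable'' poles in any mixed configuration of the transform variables. The delicate point is to make precise, via a multidimensional inverse-transform or final-value argument, that the absence of such joint poles forces $\lim_{t_1,\dots,t_r\to\infty,\ j_{r+1},\dots,j_k\to\infty}x=0$; I would model this step on the final-value computation in the proof of Lemma~\ref{lemContStable}, applying Lemma~\ref{lemFinalVT} in the continuous variables after the discrete variables have been summed out, and using $\det(A)\neq0$ (inherited from the non-vanishing of $\Delta$ at suitable limiting $\rho$) to recover $x$ from $D^\nu x$.

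Finally, I would assemble these pieces: the transformed state is analytic on $\mathcal{U}_{z\lambda}$ by the invertibility of $H(\rho)-A$, the continuous modes decay by the sector condition, the discrete modes decay by the disc condition, and the coupling is absorbed because a single determinant governs the entire region. This yields asymptotic stability of~(\ref{sysFraConDis}), completing the proof. The one caveat I would flag is that, as in Lemma~\ref{lemContStable}, the statement gives a \emph{sufficient} condition, so I need only establish the forward implication and may rely on the transform argument without proving a converse.
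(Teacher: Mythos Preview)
Your approach is essentially the same as the paper's: the paper's entire proof reads ``It can be proved by straightforward combinations of Lemma~\ref{lemDiscStable} and~\ref{lemContStable},'' with no further detail. You propose exactly this combination, but you go considerably further by spelling out the joint Laplace/$Z$-transform, identifying $\mathcal{U}_{z\lambda}$ as the Cartesian product $\mathcal{U}_\lambda\times\mathcal{U}_z$, and --- most importantly --- flagging the coupling through $A_{cd}$ and $A_{dc}$ as the genuine technical obstacle, which the paper does not even acknowledge. In that sense your proposal is a strictly more careful version of the paper's own argument, and the ``delicate point'' you isolate (making the multidimensional final-value step rigorous in the presence of cross-coupling) is precisely the content the paper's one-line proof leaves implicit.
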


\begin{proof}
It can be proved by straightforward combinations of Lemma \ref{lemDiscStable} and \ref{lemContStable}.
\end{proof}

\subsection{Point-clustering}


To proceed, consider the following matrices
\begin{align}
P_i=\begin{bmatrix}
p_{i_{11}} & p_{i_{12}}\\
p_{i_{12}}^* & p_{i_{22}}
\end{bmatrix}\in\mathbb{C}^{2\times 2},\
Q_i=\begin{bmatrix}
q_{i_{11}} & q_{i_{12}}\\
q_{i_{12}}^* & q_{i_{22}}
\end{bmatrix}\in\mathbb{C}^{2\times 2}\label{defPiQi}
\end{align}

Define the sets $\mathcal{D}_i$ as
\begin{align}
\mathcal{D}_i\triangleq & \left\{
s\in\mathbb{C}: \mathcal{F}_{P_{i}}(s)\geq0,\mathcal{F}_{Q_{i}}(s)\geq0,\forall i\in\mathbb{I}(k)
\right\}\label{defDi}
\end{align}
where the functions $\mathcal{F}_{X_i}(s)$ are defined by
\begin{equation}
\mathcal{F}_{X_i}(s)\triangleq\begin{bmatrix}
sI\\
I
\end{bmatrix}^*X_i\begin{bmatrix}
sI\\
I
\end{bmatrix},\ \forall i\in\mathbb{I}(k)\label{defFXi}
\end{equation}

We limit our consideration to sets described by $\mathcal{D}_i$.
Define the "$k$-region" $\mathcal{D}$ as
\begin{equation}
\mathcal{D}\triangleq\mathcal{D}_1\times\mathcal{D}_2\times ... \times\mathcal{D}_k
\label{defkD}
\end{equation}

Let $\mathcal{D}$ represent $\mathcal{U}_{z\lambda}$, then the matrices $P_i$ and $Q_i$ are
\begin{align}
P_{i} & =\begin{bmatrix}
0 & \sin(\frac{\pi}{2}\nu)-j\cos(\frac{\pi}{2}\nu)\\
\sin(\frac{\pi}{2}\nu)+j\cos(\frac{\pi}{2}\nu) & 0
\end{bmatrix}\notag \\
Q_{i} & =\begin{bmatrix}
0 & \sin(\frac{\pi}{2}\nu)+j\cos(\frac{\pi}{2}\nu)\\
\sin(\frac{\pi}{2}\nu)-j\cos(\frac{\pi}{2}\nu) & 0
\end{bmatrix}\label{equContPQ}
\end{align}
where $0<\nu\leq1$ and $\forall i\in\mathbb{I}(r)$.

And
\begin{align}
P_{i} & =\begin{bmatrix}
1 & 0\\
0 & -1
\end{bmatrix}\notag\\
Q_{i} & =\mathbf{0}_2\label{equDistPQ}
\end{align}
where $\forall i\in\{r+1,...,k\}$.

The following gives a sufficient condition for the stability of system (\ref{sysFraConDis}).

\begin{theorem}\label{thmCDStableLMI}
The system (\ref{sysFraConDis}) is asymptotically stable if there exist matrices $U_{n_i},V_{n_i}\in\mathcal{H}^+_{n_i}$ and a matrix $J=J^*$ such that
\begin{equation}
Z=G+\begin{bmatrix}
I & -A
\end{bmatrix}^*J\begin{bmatrix}
I & -A
\end{bmatrix}<0\label{ineqthmStable}
\end{equation}
where
\begin{equation}
G\triangleq\begin{bmatrix}
\underset{i=1}{\overset{k}{\oplus}}(U_{i}p_{i_{11}}+V_{i}q_{i_{11}}) & \underset{i=1}{\overset{k}{\oplus}}(U_{i}p_{i_{12}}+V_{i}q_{i_{12}})\\
\underset{i=1}{\overset{k}{\oplus}}(U_{i}p_{i_{12}}^*+V_{i}q_{i_{12}}^*) & \underset{i=1}{\overset{k}{\oplus}}(U_{i}p_{i_{22}}+V_{i}q_{i_{22}})
\end{bmatrix}\label{defG}
\end{equation}
and $p_{i_{11}},p_{i_{12}},p_{i_{22}},q_{i_{11}},q_{i_{12}},q_{i_{22}}$
are defined in (\ref{defPiQi}) with (\ref{equContPQ}) and (\ref{equDistPQ}).
\end{theorem}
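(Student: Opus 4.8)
The plan is to reduce the claim to the frequency-domain criterion of Theorem \ref{thmStableRegion} and then argue by contradiction. By Theorem \ref{thmStableRegion} it suffices to show that the LMI (\ref{ineqthmStable}) forces $\det(H(\rho)-A)\neq0$ for every $\rho\in\mathcal{U}_{z\lambda}$. I would first recall that, with the choices (\ref{equContPQ}) and (\ref{equDistPQ}) for $P_i,Q_i$, the region $\mathcal{U}_{z\lambda}$ coincides with the $k$-region $\mathcal{D}$ of (\ref{defkD}); that is, $\rho\in\mathcal{U}_{z\lambda}$ holds precisely when $\mathcal{F}_{P_i}(\rho_i)\geq0$ and $\mathcal{F}_{Q_i}(\rho_i)\geq0$ for all $i\in\mathbb{I}(k)$, with $\mathcal{F}_{X_i}$ given by (\ref{defFXi}) evaluated at the scalar $s=\rho_i$.

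Next I would suppose, for contradiction, that $\det(H(\rho^0)-A)=0$ for some $\rho^0\in\mathcal{U}_{z\lambda}$. Singularity of $H(\rho^0)-A$ yields a nonzero vector $v=[v_1^*,\dots,v_k^*]^*$, partitioned conformably with the block sizes $n_i$, satisfying $(H(\rho^0)-A)v=0$. The central device is to form the augmented vector
\[
w=\begin{bmatrix}H(\rho^0)v\\ v\end{bmatrix},
\]
for which $\begin{bmatrix}I & -A\end{bmatrix}w=(H(\rho^0)-A)v=0$. Consequently the $J$-term in (\ref{ineqthmStable}) annihilates $w$, so that $w^*Zw=w^*Gw$, which decouples the two summands of $Z$ irrespective of the choice of $J$.

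The computational heart of the argument is then to evaluate $w^*Gw$. Using the $2\times2$-block, direction-wise direct-sum structure of $G$ in (\ref{defG}) together with the fact that the $i$th block of $H(\rho^0)v$ is $\rho^0_i v_i$, the quadratic form separates over the $k$ directions and collapses, for each $i$, to the scalar weights $\mathcal{F}_{P_i}(\rho^0_i)$ and $\mathcal{F}_{Q_i}(\rho^0_i)$; explicitly,
\[
w^*Gw=\sum_{i=1}^{k}\Big(\mathcal{F}_{P_i}(\rho^0_i)\,v_i^*U_iv_i+\mathcal{F}_{Q_i}(\rho^0_i)\,v_i^*V_iv_i\Big).
\]
Since $U_i,V_i\in\mathcal{H}^+_{n_i}$ give $v_i^*U_iv_i\geq0$ and $v_i^*V_iv_i\geq0$, and since $\rho^0\in\mathcal{U}_{z\lambda}$ forces $\mathcal{F}_{P_i}(\rho^0_i)\geq0$ and $\mathcal{F}_{Q_i}(\rho^0_i)\geq0$, every term is nonnegative, so $w^*Gw\geq0$. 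On the other hand, $w\neq0$ (its lower block is $v\neq0$) and $Z<0$ give $w^*Zw<0$; combined with $w^*Zw=w^*Gw$ this forces $w^*Gw<0$, a contradiction. Hence $\det(H(\rho)-A)\neq0$ on $\mathcal{U}_{z\lambda}$, and Theorem \ref{thmStableRegion} yields asymptotic stability.

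I expect the main obstacle to be the bookkeeping in the middle step: verifying that the block structure of $G$ makes the quadratic form split cleanly into the weights $\mathcal{F}_{P_i},\mathcal{F}_{Q_i}$, and confirming the sign conventions in (\ref{equContPQ})–(\ref{equDistPQ}) so that membership in $\mathcal{U}_{z\lambda}$ is exactly nonnegativity of these weights. An alternative, essentially equivalent route would apply Lemma \ref{lemNuXNuT} with $\mathcal{N}_u=\begin{bmatrix}I & -A\end{bmatrix}^*$ to rewrite (\ref{ineqthmStable}) as $\mathcal{U}G\mathcal{U}^*<0$ for the complementary $\mathcal{U}$, but the direct contradiction argument above is cleaner and avoids constructing the complement explicitly.
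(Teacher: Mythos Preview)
Your argument is correct and matches the paper's proof essentially step for step: the paper likewise picks a nonzero kernel vector $y$ of $H(\rho)-A$, forms $x=\begin{bmatrix}H(\rho)y\\ y\end{bmatrix}$, evaluates $x^*Zx$ to obtain exactly your sum $\sum_i(\mathcal{F}_{P_i}(\rho_i)y_i^*U_iy_i+\mathcal{F}_{Q_i}(\rho_i)y_i^*V_iy_i)$ after the $J$-term vanishes, and derives the same contradiction with $Z<0$. The only cosmetic difference is that the paper phrases it as the contrapositive (``if $\det(H(\rho)-A)=0$ then $\rho\notin\mathcal{D}$'') rather than assuming $\rho\in\mathcal{U}_{z\lambda}$ up front.
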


\begin{proof}
We'll prove that for $\forall \rho$ that satisfies $\det(\underset{i=1}{\overset{k}{\oplus}}\rho_i I_{n_i}-A)=0$,
then $\rho\notin\mathcal{D}$.

Let $A(\rho)=\underset{i=1}{\overset{k}{\oplus}}\rho_i I_{n_i}-A$. If $\det(A(\rho))=0$,
then there exists a nonzero vector $y\in\mathbb{C}^n$ such that
\begin{equation}
A(\rho)y=0\label{prfStaLMI2}
\end{equation}

Let
\[
y=\begin{bmatrix}
y_1\\
\vdots\\
y_k
\end{bmatrix}
\]
where $y_i\in\mathbb{C}^{n_i}$.

And let
\[
x=\begin{bmatrix}
(\underset{i=1}{\overset{k}{\oplus}}\rho_i I_{n_i})y\\
y
\end{bmatrix}
\]

From (\ref{ineqthmStable}), we get
\[
x^*Zx<0
\]
which leads to
\begin{equation}
\sum\limits_{i=1}^{k}(\mathcal{F}_{P_i}(\rho_i)y^*_iU_iy_i+\mathcal{F}_{Q_i}(\rho_i)y^*_iV_iy_i)
+(A(\rho)y)^*J(A(\rho)y)<0\label{prfStaLMI1}
\end{equation}

According to (\ref{prfStaLMI2}), the second term of (\ref{prfStaLMI1}) is zero. If $\rho\in\mathcal{D}$, then according to (\ref{defDi}) the
first term of (\ref{prfStaLMI1}) is positive or zero. Therefore, when $\rho\in\mathcal{D}$, $\det(A(\rho))\neq0$.
Due to Theorem \ref{thmStableRegion}, the system (\ref{sysFraConDis}) is stable.
\end{proof}

\begin{corollary}\label{coroStable}
The system (\ref{sysFraConDis}) is asymptotically stable if there exist matrices $U_{n_i},V_{n_i}\in\mathcal{H}^+_{n_i}$ and a matrix $J=J^*$ such that
\begin{equation}
G+\begin{bmatrix}
I \\
-A
\end{bmatrix}J\begin{bmatrix}
I\\
-A
\end{bmatrix}^*<0\label{ineqcoroStable}
\end{equation}
where $G$ is defined as in (\ref{defG}).
\end{corollary}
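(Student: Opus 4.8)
The plan is to obtain the corollary from Theorem~\ref{thmCDStableLMI} by a transposition (duality) argument, using that $A$ is real and that the stability criterion of Theorem~\ref{thmStableRegion} is invariant under transposing the state matrix.

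First I would note that stability is unaffected by replacing $A$ with $A^{T}$. Since $H(\rho)=\oplus_{i=1}^{k}\rho_i I_{n_i}$ is block diagonal with scalar blocks it is symmetric, so $H(\rho)-A^{T}=(H(\rho)-A)^{T}$ and therefore
\[
\Delta(\rho,A^{T})=\det\big(H(\rho)-A^{T}\big)=\det\big((H(\rho)-A)^{T}\big)=\Delta(\rho,A).
\]
Because $A^{T}$ retains the continuous/discrete block partition of $A$, the matrix $A^{T}$ defines a system of the same form (\ref{sysFraConDis}); by Theorem~\ref{thmStableRegion} and the identity above, that system is asymptotically stable if and only if the original one is.

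Next I would apply Theorem~\ref{thmCDStableLMI} to this transposed system, i.e.\ with $A$ replaced by $A^{T}$. The matrix $G$ of (\ref{defG}) involves only $U_i,V_i,P_i,Q_i$ and not the state matrix, so it is untouched; the theorem then guarantees stability of the $A^{T}$-system as soon as there exist $U_{i},V_{i}\in\mathcal H^+_{n_i}$ and $J=J^{*}$ with
\[
G+\begin{bmatrix} I & -A^{T}\end{bmatrix}^{*}J\begin{bmatrix} I & -A^{T}\end{bmatrix}<0 .
\]
The key step is the bookkeeping identity: as $A\in\mathbb{R}^{n\times n}$ we have $\overline{A}=A$ and $A^{*}=A^{T}$, hence $\begin{bmatrix} I & -A^{T}\end{bmatrix}^{*}=\begin{bmatrix} I\\ -A\end{bmatrix}$ and $\begin{bmatrix} I & -A^{T}\end{bmatrix}=\begin{bmatrix} I\\ -A\end{bmatrix}^{*}$. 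Substituting, the last inequality becomes precisely (\ref{ineqcoroStable}); combined with the first step this proves the corollary.

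I expect the one delicate point to be exactly this conjugate-transpose accounting: the reality of $A$ is essential to convert the row block $[\,I\ \ -A^{T}\,]$ and its adjoint into the column block $[\,I;\,-A\,]$ appearing in (\ref{ineqcoroStable}), since over $\mathbb{C}$ one would instead get $[\,I;\,-\overline{A}\,]$. As a self-contained alternative that avoids invoking the transposed system, I would repeat the proof of Theorem~\ref{thmCDStableLMI} with a \emph{left} null vector: if $\det(H(\rho)-A)=0$ for some $\rho\in\mathcal D$, then $\overline{\Delta(\rho,A)}=\Delta(\bar\rho,A)=0$ and $\bar\rho\in\mathcal D$ (the region $\mathcal U_{z\lambda}$ is symmetric under conjugation), so there is a nonzero $w$ with $(H(\bar\rho)-A^{T})w=0$; testing (\ref{ineqcoroStable}) against $x=\big[\,(\oplus_i\bar\rho_i I_{n_i})w;\ w\,\big]$ annihilates the $J$-term through $\begin{bmatrix} I\\ -A\end{bmatrix}^{*}x=0$ and collapses $x^{*}(\cdot)x$ to $\sum_i\big(\mathcal F_{P_i}(\bar\rho_i)\,w_i^{*}U_iw_i+\mathcal F_{Q_i}(\bar\rho_i)\,w_i^{*}V_iw_i\big)<0$, contradicting $\bar\rho\in\mathcal D$ together with $U_i,V_i>0$.
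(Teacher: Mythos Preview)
Your argument is correct and matches the paper's own proof: both exploit the duality $\Delta(\rho,A^{T})=\Delta(\rho,A)$ so that the LMI (\ref{ineqcoroStable}) is precisely the condition of Theorem~\ref{thmCDStableLMI} applied to $A^{T}$, whence stability follows from Theorem~\ref{thmStableRegion}. The paper phrases this as ``similar to the proof of Theorem~\ref{thmCDStableLMI}'' to conclude $\det(H(\rho)-A^{T})\neq 0$ on $\mathcal D$, which is exactly your left-null-vector alternative spelled out.
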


\begin{proof}
Similar to the proof of Therem \ref{thmCDStableLMI}, if inequality (\ref{ineqcoroStable}) holds, then
\[
\det(\underset{i=1}{\overset{k}{\oplus}}\rho_i I_{n_i}-A^T)\neq0,\rho\in\mathcal{D}
\]
which is equivalent to
\[
\det(\underset{i=1}{\overset{k}{\oplus}}\rho_i I_{n_i}-A)\neq0,\rho\in\mathcal{D}
\]
Therefore, the system is asymptotically stable due to Theorem \ref{thmStableRegion}.
\end{proof}

\begin{corollary}\label{coroStableSym}
The system (\ref{sysFraConDis}) is asymptotically stable if there exist matrices $U_{n_i},V_{n_i}\in\mathcal{H}^+_{n_i}$ and a matrix $R$ such that
\begin{equation}
G+Sym\left\{\begin{bmatrix}
I \\ -A
\end{bmatrix}R\begin{bmatrix}
I & I
\end{bmatrix}\right\}<0\label{ineqcoroStableSym}
\end{equation}
where $G$ is defined as in (\ref{defG}).
\end{corollary}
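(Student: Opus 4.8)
The plan is to show that the hypothesis (\ref{ineqcoroStableSym}) forces the hypothesis (\ref{ineqcoroStable}) of Corollary \ref{coroStable}, and then simply to quote Corollary \ref{coroStable}. I would deliberately avoid trying to mimic the direct argument of Theorem \ref{thmCDStableLMI}: the row factor $\begin{bmatrix} I & I\end{bmatrix}$ appearing in the $Sym$ term does not annihilate the test vectors $x=\begin{bmatrix}(\oplus_{i}\rho_{i}I)y\\ y\end{bmatrix}$ used there, so the cross term would not vanish and the elementary approach stalls. The two elimination lemmas are precisely the tools that remove this difficulty.

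First I would recast (\ref{ineqcoroStableSym}) as statement 1 of Lemma \ref{lemOrthogonalComplement}, taking $\Phi=G$, $\mathcal{U}=\begin{bmatrix} I\\ -A\end{bmatrix}$, $\mathcal{V}=\begin{bmatrix} I & I\end{bmatrix}$ and $\mathcal{X}=R$. Since (\ref{ineqcoroStableSym}) guarantees that such an $R$ exists, the lemma yields the equivalent statement 2, namely $\mathcal{N}_{u}G\mathcal{N}_{u}^{*}<0$ together with $\mathcal{N}_{v}^{*}G\mathcal{N}_{v}<0$. I would produce the two orthogonal complements explicitly: because $\begin{bmatrix} A & I\end{bmatrix}\begin{bmatrix} I\\ -A\end{bmatrix}=A-A=0$ one may take $\mathcal{N}_{u}=\begin{bmatrix} A & I\end{bmatrix}$, and because $\begin{bmatrix} I & -I\end{bmatrix}\begin{bmatrix} I\\ I\end{bmatrix}=0$ one may take $\mathcal{N}_{v}^{*}=\begin{bmatrix} I & -I\end{bmatrix}$. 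Only the first of the two resulting inequalities, $\begin{bmatrix} A & I\end{bmatrix}G\begin{bmatrix} A^{*}\\ I\end{bmatrix}<0$, will be used; the particular basis is immaterial, since replacing $\mathcal{N}_{u}$ by $M\mathcal{N}_{u}$ with $M$ invertible leaves the sign of the quadratic form unchanged.

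Next I would read this inequality as statement 1 of Lemma \ref{lemNuXNuT} with $\Phi=G$ and $\mathcal{U}=\begin{bmatrix} A & I\end{bmatrix}$, whose orthogonal complement is $\mathcal{N}_{u}=\begin{bmatrix} I\\ -A\end{bmatrix}$ (the same identity $\begin{bmatrix} A & I\end{bmatrix}\begin{bmatrix} I\\ -A\end{bmatrix}=0$, now read as a right null space). The equivalent statement 2 then supplies a Hermitian matrix $J$ with $G+\begin{bmatrix} I\\ -A\end{bmatrix}J\begin{bmatrix} I\\ -A\end{bmatrix}^{*}<0$, which is exactly inequality (\ref{ineqcoroStable}). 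Invoking Corollary \ref{coroStable} completes the argument and establishes the asymptotic stability of (\ref{sysFraConDis}).

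The step I expect to require the most care is the bookkeeping that lets a single matrix, $\begin{bmatrix} A & I\end{bmatrix}$, serve as the orthogonal complement $\mathcal{N}_{u}$ in Lemma \ref{lemOrthogonalComplement} and simultaneously as the matrix $\mathcal{U}$ in Lemma \ref{lemNuXNuT}; keeping the dimensions ($G\in\mathbb{C}^{2n\times 2n}$, $\mathcal{U}\in\mathbb{C}^{2n\times n}$, $R\in\mathbb{C}^{n\times n}$) and the two null-space identities consistent is the only genuine content. Everything else is an immediate chaining of the two lemmas down to Corollary \ref{coroStable}, so no substantive computation is needed.
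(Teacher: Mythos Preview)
Your proposal is correct and follows essentially the same route as the paper: apply Lemma~\ref{lemOrthogonalComplement} to (\ref{ineqcoroStableSym}) to extract $\begin{bmatrix} A & I\end{bmatrix}G\begin{bmatrix} A & I\end{bmatrix}^{*}<0$, then invoke Lemma~\ref{lemNuXNuT} to recover (\ref{ineqcoroStable}) and conclude via Corollary~\ref{coroStable}. The only cosmetic difference is that the paper additionally writes out the second null-space condition $\begin{bmatrix} I & -I\end{bmatrix}G\begin{bmatrix} I & -I\end{bmatrix}^{*}$ by direct computation, which, as you correctly note, is already implied by the hypothesis and is not needed for the chain.
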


\begin{proof}

Let
\begin{align}
G_{11} & \triangleq \underset{i=1}{\overset{k}{\oplus}}(U_{i}p_{i_{11}}+V_{i}q_{i_{11}})\notag\\
G_{12} & \triangleq \underset{i=1}{\overset{k}{\oplus}}(U_{i}p_{i_{12}}+V_{i}q_{i_{12}})\notag\\
G_{22} & \triangleq \underset{i=1}{\overset{k}{\oplus}}(U_{i}p_{i_{22}}+V_{i}q_{i_{22}})
\label{defG1122}
\end{align}

Then,
\begin{align}
&\begin{bmatrix}
I & -I
\end{bmatrix}G\begin{bmatrix}
I & -I
\end{bmatrix}^*\notag\\
=& G_{11}-G_{12}^*-G_{12}+G_{22}\notag\\
=& \begin{bmatrix}
-2\left( \underset{i=1}{\overset{r}{\oplus}}\left(U_{i}\sin(\frac{\pi}{2}\nu)+
V_{i}\sin(\frac{\pi}{2}\nu)\right)  \right) & 0\\
0 & \underset{i=r+1}{\overset{k}{\oplus}}0_{n_i}
\end{bmatrix}\notag\\
<& 0
\label{prfCoroStab2}
\end{align}

It's obvious that
\[
\begin{bmatrix}
I & -I
\end{bmatrix}\begin{bmatrix}
I\\ I
\end{bmatrix}=0
\]

Therefore, according to Lemma \ref{lemOrthogonalComplement} and inequality (\ref{ineqcoroStableSym}) and (\ref{prfCoroStab2}),
the following inequality holds
\begin{equation}
\begin{bmatrix}
A & I
\end{bmatrix}G\begin{bmatrix}
A^T \\
I
\end{bmatrix}<0\label{prfCoroStab1}
\end{equation}

According to Lemma \ref{lemNuXNuT}, inequality (\ref{prfCoroStab1}) is then equivalent to
\begin{equation}
G+\begin{bmatrix}
I \\ -A
\end{bmatrix}J\begin{bmatrix}
I & -A^T
\end{bmatrix}<0
\end{equation}

Thus, according to Corollary \ref{coroStable}, the system is asymptotically stable.

This completes the proof.

\end{proof}

\section{Numerical Examples}
\subsection{Example 1}
The following example presents a (1+1)D system of (\ref{sysFraConDis}), i.e. a system with one continuous independent variable and one discrete independent
variable. The system is considered:
\begin{align}
\nu=& 0.5\notag\\
A_c=& \begin{bmatrix}
-0.8 & 0\\
0 & -1.2
\end{bmatrix},\ A_{cd}=\begin{bmatrix}
0.5 & 0.3\\
0.7 & 0.2
\end{bmatrix}\notag\\
A_{dc}=& \begin{bmatrix}
0.4 & 0.3\\
0.8 & 0.9
\end{bmatrix},\ A_d=\begin{bmatrix}
-0.3 & 0\\
0 & -0.6
\end{bmatrix}\notag\\
A= & \begin{bmatrix}
A_c & A_{cd}\\
A_{dc} & A_{d}
\end{bmatrix}
\label{paraSysCD}
\end{align}

Let a system be system (\ref{sysFraConDis}) with (\ref{paraSysCD}). Applying Theorem \ref{thmCDStableLMI}, the variables can be calculated by the Matlab LMI toolbox. The solution is
\begin{align}
U_1=&\begin{bmatrix}
146.84 & 0 \\
0 & 146.84
\end{bmatrix},\ U_2=\begin{bmatrix}
24.3 & 5.99\\
5.99 & 1.9
\end{bmatrix}\notag\\
V_1=&\begin{bmatrix}
4.24 & 14.68\\
14.68 & 194.82
\end{bmatrix},\ V_2=\begin{bmatrix}
146.84 & 0 \\
0 & 146.84
\end{bmatrix}\notag\\
J=&\begin{bmatrix}
-164.9 & -57.19 & -210.42 & -75.7\\
-57.19 & -108.77 & -154.38 & -62.71\\
-210.42 & -154.38 & -643.18 & -137.73\\
-75.7 & -62.71 & -137.73 & -116.65
\end{bmatrix}
\end{align}

It means that the continuous-discrete (1+1)D system is stable.

\subsection{Example 2}

The system is considered:
\begin{align}
\nu=& 0.9\notag\\
A_c=& \begin{bmatrix}
-0.8 & 0.5\\
0.3 & -1.2
\end{bmatrix},\ A_{cd}=\begin{bmatrix}
0.5 & 0.6\\
0.7 & 0.8
\end{bmatrix}\notag\\
A_{dc}=& \begin{bmatrix}
0.9 & 0.1\\
0.2 & 0.1
\end{bmatrix},\ A_d=\begin{bmatrix}
-0.7 & 0\\
0 & -0.2
\end{bmatrix}\notag\\
A= & \begin{bmatrix}
A_c & A_{cd}\\
A_{dc} & A_{d}
\end{bmatrix}
\label{paraSysCD2}
\end{align}

Let a system be system (\ref{sysFraConDis}) with (\ref{paraSysCD2}). Applying Corollary \ref{coroStableSym}, the variables can be calculated by the Matlab LMI toolbox. The solution is
\begin{align}
U_1=&\begin{bmatrix}
35333.44 & 0 \\
0 & 35333.44
\end{bmatrix},\ U_2=\begin{bmatrix}
36674.54 & 4958.14\\
4958.14 & 7924.014
\end{bmatrix}\notag\\
V_1=&\begin{bmatrix}
32747.79 & 0\\
0 & 14331.51
\end{bmatrix},\ V_2=\begin{bmatrix}
51948.27 & 29479.11\\
29479.11 & 51948.27
\end{bmatrix}\notag\\
R=&\begin{bmatrix}
-9111.82 & -53.82 &	-29013.36 &	-830.36\\
1813.13 & -12214.89 &	-13744.63 &	-6499.66 \\
26731.07 &	4233.10 &	-30626.82 &	-11869.47\\
2051.31 &	1540.31 &	2773.74 &	-8884.96\\
\end{bmatrix}
\end{align}

It means that the continuous-discrete (1+1)D system is stable.

\section{Conclusion}

In this paper, the fractional continuous-discrete systems are presented, where the fractional order is $0<\nu\leq1$. The stability and Robust stability of fractional continuous-discrete systems have been investigated.
Invoking fractional final value theorem, the sufficient condition of stability of such systems is proved. Then, we
prove the sufficient condition of Robust multidimensional interval system. Finally, examples are given to verify the theorems.

\bibliographystyle{unsrt}
\bibliography{Multidimensional_system}
\end{document}